\newcommand{\abssec}[1]{\noindent\normalsize {\bfseries #1\quad }\ignorespaces}
\renewenvironment{abstract}{\abssec{Abstract}}{\par\vspace{.1in}}
\newenvironment{keywords}{\abssec{Key Words}}{\par\vspace{.1in}}
\newenvironment{AMSMOS}{\abssec{AMS subject
  classification}}{\par\vspace{.1in}}
\theoremstyle{plain}
\newtheorem{theorem}{Theorem}
\newtheorem{corollary}[theorem]{Corollary}
\newtheorem{lemma}[theorem]{Lemma}
\theoremstyle{definition}
\numberwithin{equation}{section}
\def\R{\mathbb{R}}
\def\S{\mbox{$\xi(r)\,r^\lambda\sin(\lambda\theta)$}}
\def\Sd{p_s}
\def\Fd{\phi_s}
\newcommand{\V}{H^1_\Delta(\Omega)\cap H^1_0(\Omega)}
\newcommand{\Span}{\text{Span}}
\definecolor{darkgreen}{rgb}{0.0, 0.5, 0.3}
\begin{document}

\title{\LARGE A dual singular complement method \\ for the
  numerical solution of the Poisson equation \\ with $L^2$ boundary
  data in non-convex domains\thanks{The work was partially supported
    by Deutsche Forschungsgemeinschaft, IGDK 1754.}}

\author{Thomas Apel\thanks{\texttt{thomas.apel@unibw.de},
    Universit\"at der Bundeswehr M\"unchen, Institut f\"ur Mathematik
    und Bauinformatik, D-85579 Neubiberg, Germany} \and Serge
  Nicaise\thanks{\texttt{snicaise@univ-valenciennes.fr}, LAMAV,
    Institut des Sciences et Techniques de Valenciennes, Universit\'e
    de Valenciennes et du Hainaut Cambr\'esis, B.P. 311, 59313
    Valenciennes Cedex, France } \and Johannes
  Pfefferer\thanks{\texttt{johannes.pfefferer@unibw.de}, Universit\"at
    der Bundeswehr M\"unchen, Institut f\"ur Mathematik und
    Bauinformatik, D-85579 Neubiberg, Germany}}
\maketitle

\begin{abstract}
    The very weak solution of the Poisson equation with $L^2$
    boundary data is defined by the method of transposition. The
    finite element solution with regularized boundary data converges
    with order $1/2$ in convex domains but
  has a reduced convergence order in
  non-convex domains. As a remedy, a dual variant of the singular
  complement method is proposed. The error order of the convex case is
  retained. Numerical experiments confirm the theoretical results.
\end{abstract}

\begin{keywords}
  Elliptic boundary value problem, very weak formulation, finite
  element method, singular complement method, discretization error
  estimate
\end{keywords}

\begin{AMSMOS}
  65N30; 65N15
\end{AMSMOS}

\section{Introduction}

In this paper we consider the  boundary value problem
\begin{align} \label{eq:bvp}
  -\Delta y &= f \quad\text{in }\Omega, & 
  y &= u \quad\text{on }\Gamma=\partial\Omega,
\end{align}
with right hand side $f\in H^{-1}(\Omega)$ and boundary data $u\in L^2(\Gamma)$. We assume $\Omega\subset\R^2$
to be a bounded polygonal domain with boundary $\Gamma$.  Such
problems arise in optimal control when the Dirichlet boundary control
is considered in $L^2(\Gamma)$ only, see for example the papers by
Deckelnick, G\"unther, and Hinze, \cite{DeckelnickGuentherHinze2009},
French and King, \cite{FrenchKing1991}, and May, Rannacher, and
Vexler, \cite{MayRannacherVexler2008}.

For boundary data $u\in L^2(\Gamma)$ we cannot expect a weak solution
$y\in H^1(\Omega)$. Therefore we define a very weak solution by the
method of transposition which goes back at least to Lions and Magenes
\cite{LionsMagenes1968}: Find
\begin{align}\label{eq:veryweak2}
  y\in L^2(\Omega):\quad (y, \Delta v)_\Omega =
  (u,\partial_n v)_\Gamma - (f,v)_\Omega \quad\forall
  v\in V
\end{align}
with $(w,v)_G:=\int_G wv$ denoting the $L^2(G)$ scalar product
  or an appropriate duality product.
In our previous paper \cite{ApelNicaisePfefferer2014a} we  showed
that the appropriate space $V$ for the test functions is
\begin{align}\label{eq:H1Delta}
  V:=H^1_\Delta(\Omega)\cap H^1_0(\Omega) \quad\text{with}\quad
  H^1_\Delta(\Omega):=\{v\in H^1(\Omega): \Delta v\in L^2(\Omega)\}.
\end{align}
In particular it ensures $\partial_n v\in L^2(\Gamma)$ for $v\in V$
such that the formulation \eqref{eq:veryweak2} is well defined. We
proved the existence of a unique solution $y\in L^2(\Omega)$ for $u\in
L^2(\Gamma)$ and $f\in H^{-1}(\Omega)$, and that the solution is even in
$H^{1/2}(\Omega)$. The method of transposition is used in different
variants also in \cite{FrenchKing1991,Berggren2004,CasasRaymond2006,%
  CasasMateosRaymond2009,DeckelnickGuentherHinze2009,MayRannacherVexler2008}.

Consider now the discretization of the boundary value problem.  Let
$\mathcal{T}_h$ be a family of quasi-uniform, conforming finite
element meshes, and introduce the finite element spaces
\begin{align*}
  Y_h = \{v_h\in H^1(\Omega): v_h|_T\in\mathcal{P}_1\ \forall
  T\in\mathcal{T}_h\}, \quad Y_{0h} = Y_h\cap H^1_0(\Omega),\quad
  Y_h^\partial = Y_h|_{\partial\Omega}.
\end{align*}
Since the boundary datum $u$ is in general not contained in
$Y_h^\partial$ we have to approximate it by $L^2(\Gamma)$-projection
or by quasi-interpolation. We showed in
\cite{ApelNicaisePfefferer2014a} that we can construct in this way a
function $u^h$ with
\begin{align*}
  \|u-u^h\|_{H^{-1/2}(\Gamma)}\le Ch^{1/2}\|u\|_{L^2(\Gamma)}.
\end{align*}
As a side effect, the boundary datum is regularized since $u^h\in
H^{1/2}(\Gamma)$. Hence we can consider a regularized (weak) solution
 in $Y_*^h:=\{v\in H^1(\Omega): v|_\Gamma=u^h\}$,  
\begin{align}\label{eq:regsol}
  y^h\in Y_*^h: \quad (\nabla y^h,\nabla v)_\Omega =
  (f,v)_\Omega \quad\forall v\in H^1_0(\Omega).
\end{align}
The finite element solution $y_h$ is now searched in $Y_{*h}:=
Y_*^h\cap Y_h$ and is defined in the classical way: find
\begin{align}\label{eq:serge20/06:6}
  y_h\in Y_{*h}:\quad (\nabla y_h,\nabla v_h)_\Omega =
  (f,v_h)_\Omega\quad\forall v_h\in Y_{0h}.
\end{align}
The same discretization was derived previously by Berggren
\cite{Berggren2004} from a different point of view.
In \cite{ApelNicaisePfefferer2014a} we showed that the discretization
error estimate
\begin{align*}
  \|y-y_h\|_{L^2(\Omega)}\le Ch^s
  \left(h^{1/2}\|f\|_{H^{-1}(\Omega)}+\|u\|_{L^2(\Gamma)}\right)
\end{align*}
holds for $s=1/2$ if the domain is convex; this is a slight
improvement of the result of Berggren.

Let us now consider non-convex domains. Although the very weak
solution $y$ is also in $H^{1/2}(\Omega)$ the convergence order is
reduced; the finite element method does not lead to the best
approximation in $L^2(\Omega)$. In order to describe the result we
assume for simplicity that $\Omega$ has only one corner with interior
angle $\omega\in(\pi,2\pi)$. 
We proved in \cite{ApelNicaisePfefferer2014a} the convergence order
$s\in(0,\lambda-\frac12)$, where $\lambda:=\frac\pi\omega$, and showed
by numerical experiments that the order of almost $\lambda-\frac12$ is
sharp.

In this paper, we modify the discrete solution $y_h$ from
\eqref{eq:serge20/06:6} in order to retain the convergence order
$s=\frac12$. In particular, we suggest to compute a function \[ z_h\in
Y_h\oplus\Span\{r^{-\lambda}\sin(\lambda\theta)\},\] where $r,\theta$ are
polar coordinates at the concave corner, such that the error estimate
\begin{align*}
  \|y-z_h\|_{L^2(\Omega)}\le Ch^{1/2}
  \left(h^{1/2}\|f\|_{H^{-1}(\Omega)}+\|u\|_{L^2(\Gamma)}\right)
\end{align*}
can be shown. This method is a dual variant of the singular complement
method introduced by Ciarlet and He \cite{ciarletjr:03}. Numerical
experiments confirm the theoretical results.

\section{Analytical background and regularization}

As in the introduction, let $\Omega$ be a domain with exactly one concave
corner, and denote this interior angle by $\omega\in(\pi,2\pi)$.  This
corner is located at the origin of the coordinate system, and one
boundary edge is contained in the positive $x_1$-axis. It is well
known that the weak solution of the boundary value problem 
\begin{align}\label{eq:thomas+}
  -\Delta v &= g \quad\text{in }\Omega, & 
  v &= 0 \quad\text{on }\Gamma=\partial\Omega,
\end{align}
with $g\in L^2(\Omega)$ is not contained in $H^2(\Omega)$ but in 
\[
  \V=\left(H^2(\Omega)\cap H^1_0(\Omega)\right)\oplus \Span \{\S\},
\]
$\xi$ being a cut-off function, see for example the monograph of
Grisvard \cite{grisvard:92b}.  This means that
\[
  R:=\{\Delta v: v\in H^2(\Omega)\cap H^1_0(\Omega)\},
\]
is a closed subspace of $L^2(\Omega)$. It is shown in \cite[Sect.
2.3]{grisvard:92b} that
\begin{align}\label{eq:serge20/06:1}
  L^2(\Omega)=R \overset{\perp}{\oplus} \Span \{\Sd\},
\end{align}
with the \emph{dual singular function}
\begin{align}\label{def:ps}
  p_s=r^{-\lambda}\sin(\lambda\theta)+\tilde p_s
\end{align}
where $\tilde p_s\in H^1(\Omega)$ is chosen such that the
decomposition \eqref{eq:serge20/06:1} is orthogonal for the
$L^2(\Omega)$ inner product. Therefore, the dual singular function
$p_s$ is a solution of
\begin{align}\label{eq:thomas*}
  w\in L^2(\Omega):\quad (\Delta v,w)=0 
  \quad\forall v\in H^2(\Omega)\cap H^1_0(\Omega),
\end{align}
which proves the non-uniqueness of the solution of \eqref{eq:thomas*}.
This is the dual property to the non-existence of a solution of
\eqref{eq:thomas+} in $H^2(\Omega)\cap H^1_0(\Omega)$, see
\cite[Introduction]{grisvard:92b}.

Due to \eqref{eq:serge20/06:1} we can split any $L^2(\Omega)$-function
into $L^2(\Omega)$-orthogonal parts. To this end denote by $\Pi_R$ and
$\Pi_{\Sd}$ the orthogonal projections on $R$ and on $\Span\{\Sd\}$,
respectively, i.e., for $g\in L^2(\Omega)$, it is
$g=\Pi_Rg+\Pi_{\Sd}g$ where
\begin{align*}
  \Pi_{\Sd} g&=\alpha(g)\, \Sd \quad\text{with}\quad
  \alpha(g) =\frac{(g,\Sd)_\Omega}{\|\Sd\|_{L^2(\Omega)}^2}, \\
  \Pi_R g&=g-\Pi_{\Sd} g.
\end{align*}
Since $\Sd\in L^2(\Omega)$  there exists 
\begin{align}\label{def:phis}
\Fd\in \V:\quad -\Delta \Fd= \Sd,
\end{align}
see also Section \ref{sec:scm} for more details on $\Fd$.
For the moment we assume that $\Sd$ and $\Fd$ are explicitly known;
hence the decomposition $g=\Pi_Rg+\alpha(g)\, \Sd$ can be computed
once $g$ is given. Computable approximations of $\Sd$ and $\Fd$ are
discussed in Section \ref{sec:scm}.

Now we come back to problem \eqref{eq:veryweak2} and decompose its
solution $y$ in the form
\begin{align}\label{eq:serge20/06:2}
y=\Pi_Ry+\alpha(y)\, \Sd.
\end{align}
From the decomposition \eqref{eq:serge20/06:1} we
see that problem \eqref{eq:veryweak2} is equivalent to
\begin{align*}
   (y, \Sd)_\Omega &=
  -(u,\partial_n \Fd)_\Gamma + (f,\Fd)_\Omega,
\\ 
 (y, \Delta v)_\Omega &=
  (u,\partial_n v)_\Gamma - (f,v)_\Omega \quad\forall
  v\in H^2(\Omega)\cap H^1_0(\Omega)
\end{align*}
and with the orthogonal splitting \eqref{eq:serge20/06:2} to
\begin{align*}
   \alpha(y)\,(\Sd, \Sd)_\Omega &=
  -(u,\partial_n \Fd)_\Gamma + (f,\Fd)_\Omega,
\\ 
 (\Pi_Ry, \Delta v)_\Omega &=
  (u,\partial_n v)_\Gamma - (f,v)_\Omega \quad\forall
  v\in H^2(\Omega)\cap H^1_0(\Omega).
\end{align*}
The first equation directly yields $\alpha(y)$, namely
\begin{align}\label{eq:serge20/06:7}
  \alpha(y) =\frac{-(u,\partial_n \Fd)_\Gamma+(f,\Fd)_\Omega}%
  {(\Sd, \Sd)_\Omega},
\end{align}
hence the projection of $y$ on $\Sd$ is known. It remains to find an
approximation of $\Pi_Ry$.

At this point we recall the regularization approach from
\cite{ApelNicaisePfefferer2014a} which we summarized already in the
introduction. Let $u^h\in H^{1/2}(\Gamma)$ be a regularized boundary
datum such that we can define the regularized (weak) solution in
$Y_*^h:=\{v\in H^1(\Omega): v|_\Gamma=u^h\}$,
\begin{align}\label{eq:regsol:repeat}
  y^h\in Y_*^h: \quad (\nabla y^h,\nabla v)_\Omega =
  (f,v)_\Omega \quad\forall v\in H^1_0(\Omega).
\end{align}
In \cite{ApelNicaisePfefferer2014a} we showed that the regularization
error can be estimated by
\begin{align*}
  \|y-y^h\|_{L^2(\Omega)}\le c \|u-u^h\|_{H^{-s}(\Gamma)}
\end{align*}
where $s=\frac12$ if $\Omega$ is convex and
$s\in[0,\lambda-\frac12)$ if $\Omega$ is non-convex, that means
the regularization error is in general bigger in the non-convex case.
With the next lemma we show that $\Pi_R(y-y^h)$ is not affected by
non-convex corners.

\begin{lemma}\label{lem:regularizationerrornonconvex}
  If the domain $\Omega$ is non-convex, the estimate
  \begin{align*}
    \|\Pi_R(y- y^h)\|_{L^2(\Omega)}\le C\|u-u^h\|_{H^{-1/2}(\Gamma)}
  \end{align*}
  holds.
\end{lemma}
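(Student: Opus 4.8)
The plan is to estimate $\|\Pi_R(y-y^h)\|_{L^2(\Omega)}$ by duality against the subspace $R$. Since $\Pi_R$ is the $L^2(\Omega)$-orthogonal projection onto $R$ and $\Pi_R(y-y^h)\in R$, we have
\begin{align*}
  \|\Pi_R(y-y^h)\|_{L^2(\Omega)}
  = \sup_{g\in R,\ \|g\|_{L^2(\Omega)}=1} (y-y^h,\,g)_\Omega ,
\end{align*}
so it suffices to bound $(y-y^h,g)_\Omega$ for an arbitrary $g\in R$. By definition of $R$ every such $g$ can be written as $g=\Delta v$ for a unique $v\in\Vreg$ (uniqueness because a harmonic function with zero boundary data vanishes), and because $R$ is closed the open mapping theorem applied to the bounded bijection $v\mapsto\Delta v$ from $\Vreg$ onto $R$ yields the a priori bound $\|v\|_{H^2(\Omega)}\le C\|g\|_{L^2(\Omega)}$. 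This is the crucial point: restricting $g$ to $R$ guarantees that the associated $v$ is genuinely $H^2$-regular, with no singular part.

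Next I would rewrite $(y-y^h,\Delta v)_\Omega$ using the two characterizations of $y$ and $y^h$. Since $v\in\Vreg\subset V$, the very weak formulation \eqref{eq:veryweak2} gives $(y,\Delta v)_\Omega=(u,\partial_n v)_\Gamma-(f,v)_\Omega$. For $y^h$ I would integrate by parts: Green's first identity together with $v|_\Gamma=0$ and $y^h|_\Gamma=u^h$ yields $(y^h,\Delta v)_\Omega=-(\nabla y^h,\nabla v)_\Omega+(u^h,\partial_n v)_\Gamma$, and inserting $v\in H^1_0(\Omega)$ into \eqref{eq:regsol:repeat} replaces $(\nabla y^h,\nabla v)_\Omega$ by $(f,v)_\Omega$. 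Subtracting the two expressions, the volume terms involving $f$ cancel exactly, and one is left with the clean identity
\begin{align*}
  (y-y^h,\,g)_\Omega = (u-u^h,\,\partial_n v)_\Gamma .
\end{align*}

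Finally I would estimate this boundary pairing by the $H^{-1/2}(\Gamma)$–$H^{1/2}(\Gamma)$ duality,
\begin{align*}
  |(u-u^h,\partial_n v)_\Gamma|
  \le \|u-u^h\|_{H^{-1/2}(\Gamma)}\,\|\partial_n v\|_{H^{1/2}(\Gamma)},
\end{align*}
and use the trace theorem: since $v\in H^2(\Omega)$, its normal derivative satisfies $\|\partial_n v\|_{H^{1/2}(\Gamma)}\le C\|v\|_{H^2(\Omega)}\le C\|g\|_{L^2(\Omega)}$ by the a priori bound from the first step. Combining these estimates and taking the supremum over $g\in R$ with $\|g\|_{L^2(\Omega)}=1$ gives the claim.

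I expect the main obstacle to be conceptual rather than technical, namely recognizing that projecting onto $R$ is exactly what isolates the regular part of the problem. In the full, unprojected estimate the test function $v$ solving $\Delta v=g$ for general $g\in L^2(\Omega)$ carries a singular component proportional to $r^\lambda\sin(\lambda\theta)$, so $\partial_n v$ only lives in a space worse than $H^{1/2}(\Gamma)$ and forces the weaker $H^{-s}(\Gamma)$ norm with $s<\lambda-\frac12$. Discarding the $\Span\{\Sd\}$ component restores full $H^2$ regularity of $v$, hence $H^{1/2}$ regularity of $\partial_n v$, and thereby the convex-case exponent $s=\frac12$. Once this observation is in place, the integration by parts and the cancellation of the $f$-terms are routine.
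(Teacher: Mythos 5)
Your proposal is correct and follows essentially the same route as the paper: both derive the identity $(y-y^h,\Delta v)_\Omega=(u-u^h,\partial_n v)_\Gamma$ by subtracting the integrated-by-parts form of \eqref{eq:regsol:repeat} from the very weak formulation, then test with the unique $v\in\Vreg$ solving $\Delta v=z$ for $z\in R$ and conclude by the $H^{-1/2}(\Gamma)$--$H^{1/2}(\Gamma)$ duality together with the trace bound $\|\partial_n v\|_{H^{1/2}(\Gamma)}\le c\|v\|_{H^2(\Omega)}\le c\|z\|_{L^2(\Omega)}$. The only cosmetic differences are the order of the steps and your explicit open-mapping-theorem justification of the a priori bound, which the paper simply asserts.
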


\begin{proof}
  Recall $V=H^1_\Delta(\Omega)\cap H^1_0(\Omega)$ from \eqref{eq:H1Delta}.
  From \eqref{eq:regsol:repeat} and the Green formula, we have for any
  $v\in V$
  \begin{align*}
    (f,v)_\Omega = (\nabla y^h,\nabla v)_\Omega =
    -( y^h,\Delta v)_\Omega + (y^h, \partial_n v)_\Gamma .
  \end{align*}
  Note that $v\in V$ is sufficient, see  \cite[Lemma 3.4]{Costabel1988}.
  Subtracting this expression from the very weak formulation
  \eqref{eq:veryweak2}, we get
  \[
    (y-y^h,\Delta v)_\Omega=(u-u^h, \partial_n v)_\Gamma 
    \quad\forall v\in V.
  \]
  Restricting this identity to $v\in H^2(\Omega)\cap H^1_0(\Omega)$, we have
  \begin{align}\label{eq:serge20/06:5}
    ( \Pi_R(y-y^h),\Delta v)_\Omega=(u-u^h, \partial_n
    v)_\Gamma \quad \forall v\in H^2(\Omega)\cap H^1_0(\Omega).
  \end{align}
  Now for any $z\in R$, we let $v_z\in H^2(\Omega)\cap H^1_0(\Omega)$
  be the unique solution of
  \begin{align}\label{eq:sergepbDir}
    \Delta v_z=z,
  \end{align}
  that satisfies
  \begin{align}\label{eq:serge2}
    \|\partial_n v_z\|_{H^{1/2}(\Gamma)} \le 
    c\|v_z\|_{H^2(\Omega)}\le c \|z\|_{L^2(\Omega)}.
  \end{align}
  Since for any $g\in L^2(\Omega)$ the equality
  \[
    (\Pi_R(y-y^h),g)_\Omega=
    (\Pi_R(y-y^h),\Pi_Rg)_\Omega= 
    (y-y^{h},\Pi_Rg)_\Omega
  \]
  holds we get with \eqref{eq:serge20/06:5}--\eqref{eq:serge2}
  \begin{align*}
    \|\Pi_R(y-y^h)\|_{L^2(\Omega)}&= \sup_{z\in R, z\ne 0}
    \frac{(y-y^{h}, z)_\Omega}{\|z\|_{L^2(\Omega)}} =
    \sup_{z\in R, z\ne 0}
    \frac{(u-u^h,\partial_n v_z)_\Gamma}{\|z\|_{L^2(\Omega)}} \\ &\leq
    \|u-u^h\|_{H^{-1/2}(\Gamma)} \sup_{z\in R, z\ne 0}
    \frac{\|\partial_n v_z\|_{H^{1/2}(\Gamma)}}{\|z\|_{L^2(\Omega)}} \le c
    \|u-u^h\|_{H^{-1/2}(\Gamma)}
  \end{align*}
  which is the estimate to be proved.
\end{proof}

\section{\label{sec:3}Discretization by standard finite elements}

Recall from the introduction the finite element spaces
\begin{align*}
  Y_h = \{v_h\in H^1(\Omega): v_h|_T\in\mathcal{P}_1\ \forall
  T\in\mathcal{T}_h\}, \quad Y_{0h} = Y_h\cap H^1_0(\Omega),\quad
  Y_h^\partial = Y_h|_{\partial\Omega},
\end{align*}
defined on a family $\mathcal{T}_h$ of quasi-uniform, conforming
finite element meshes. Assume that the regularized boundary datum $u^h$ is
contained in $Y_h^\partial$ such that the estimates
\begin{align}\label{eq:uhstable}
  \|u^h\|_{L^2(\Gamma)}\le c\|u\|_{L^2(\Gamma)},\\ \label{eq:carstensen}
  \|u-u^h\|_{H^{-1/2}(\Gamma)}\le Ch^{1/2}\|u\|_{L^2(\Gamma)},
\end{align}
hold. It is proved in \cite{ApelNicaisePfefferer2014a} that this can
be accomplished by using the $L^2(\Gamma)$-projection or by
quasi-interpolation. A consequence of Lemma
\ref{lem:regularizationerrornonconvex} is the estimate
\begin{align}\label{est:regularizationerrornonconvex}
  \|\Pi_R(y- y^h)\|_{L^2(\Omega)}\le Ch^{1/2}\|u\|_{L^2(\Gamma)}
\end{align}
in the case of a non-convex domain $\Omega$. (In the case of a convex
domain the operator $\Pi_R$ is the identity, and the corresponding
error estimates were already proven in
\cite{ApelNicaisePfefferer2014a}.)

As already done in the introduction, define further the finite element solution $y_h\in Y_{*h}:= Y_*^h\cap
Y_h$ via
\begin{align}\label{eq:serge20/06:6repeat}
  y_h\in Y_{*h}:\quad (\nabla y_h,\nabla v_h)_\Omega =
  (f,v_h)_\Omega\quad\forall v_h\in Y_{0h}.
\end{align}
We proved in \cite{ApelNicaisePfefferer2014a} that
\begin{align}
  \|y-y_h\|_{L^2(\Omega)}\le Ch^s
  \left(h^{1/2}\|f\|_{H^{-1}(\Omega)}+\|u\|_{L^2(\Gamma)}\right)\label{jonny:fe_error}
\end{align}
holds for $s=\frac12$ if the domain is convex but only
$s\in(0,\lambda-\frac12)$ in the non-convex case.  In the next lemma
we show that $\Pi_R(y-y_h)$ is not affected by the non-convex corners.

\begin{lemma}\label{lem:3.1}
  For non-convex domains $\Omega$ the discretization error estimate
  \begin{align*}
    \|\Pi_R(y-y_h)\|_{L^2(\Omega)}\le C  h^{1/2} 
    \left(h^{1/2}\|f\|_{H^{-1}(\Omega)}+ \|u\|_{L^2(\Gamma)}\right)
  \end{align*}
  holds. 
\end{lemma}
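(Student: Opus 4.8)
The plan is to split the error by the triangle inequality as
\[
\|\Pi_R(y-y_h)\|_{L^2(\Omega)}\le \|\Pi_R(y-y^h)\|_{L^2(\Omega)}+\|\Pi_R(y^h-y_h)\|_{L^2(\Omega)},
\]
where $y^h$ is the regularized solution from \eqref{eq:regsol:repeat}. The first term is already controlled by \eqref{est:regularizationerrornonconvex}, giving $Ch^{1/2}\|u\|_{L^2(\Gamma)}$. It therefore remains to treat the genuine discretization error $\|\Pi_R(y^h-y_h)\|_{L^2(\Omega)}$, for which I would run an Aubin--Nitsche duality argument; the point is that projecting onto $R$ lets me use a dual problem with \emph{full} $H^2$ regularity.

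Following the dual characterization already used in the proof of Lemma \ref{lem:regularizationerrornonconvex}, I write
\[
\|\Pi_R(y^h-y_h)\|_{L^2(\Omega)}=\sup_{z\in R,\,z\ne0}\frac{(y^h-y_h,z)_\Omega}{\|z\|_{L^2(\Omega)}},
\]
and for each $z\in R$ take $v_z\in H^2(\Omega)\cap H^1_0(\Omega)$ solving \eqref{eq:sergepbDir} with the regularity bound \eqref{eq:serge2}. Substituting $z=\Delta v_z$ and applying Green's formula to both $y^h$ and $y_h$, the boundary contributions $(u^h,\partial_n v_z)_\Gamma$ cancel because both functions carry the trace $u^h$ on $\Gamma$, leaving
\[
(y^h-y_h,z)_\Omega=-(\nabla(y^h-y_h),\nabla v_z)_\Omega.
\]
Since $y_h$ is the Galerkin projection of $y^h$, I have $(\nabla(y^h-y_h),\nabla w_h)_\Omega=0$ for all $w_h\in Y_{0h}$, so I may subtract the nodal interpolant $I_hv_z\in Y_{0h}$ and use the standard ($\mathcal{P}_1$) interpolation estimate together with \eqref{eq:serge2} to obtain
\[
\|\Pi_R(y^h-y_h)\|_{L^2(\Omega)}\le Ch\,\|\nabla(y^h-y_h)\|_{L^2(\Omega)}.
\]
The decisive gain is that $z\in R$ forces $v_z$ to be free of the singular complement $\Sd$, so the full order $h$ of the interpolation error is available.

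It then remains to bound the energy error $\|\nabla(y^h-y_h)\|_{L^2(\Omega)}$, and I would do so by splitting the data linearly, $y^h=y^h_f+y^h_u$ and correspondingly $y_h=y_{h,f}+y_{h,u}$, where the subscript $f$ denotes the contribution with homogeneous boundary data and $u$ the harmonic part with datum $u^h$. For the $f$-part, energy stability of the Galerkin projection yields $\|\nabla(y^h_f-y_{h,f})\|_{L^2(\Omega)}\le C\|f\|_{H^{-1}(\Omega)}$ directly. For the $u$-part I would combine the $H^{1/2}(\Gamma)$-stability of the (discrete) harmonic extension with the inverse inequality $\|u^h\|_{H^{1/2}(\Gamma)}\le Ch^{-1/2}\|u^h\|_{L^2(\Gamma)}$ for boundary finite element functions and the stability \eqref{eq:uhstable}, giving $\|\nabla(y^h_u-y_{h,u})\|_{L^2(\Omega)}\le Ch^{-1/2}\|u\|_{L^2(\Gamma)}$. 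Multiplying the sum by the factor $h$ from the duality step produces $Ch\|f\|_{H^{-1}(\Omega)}+Ch^{1/2}\|u\|_{L^2(\Gamma)}$, which together with the regularization contribution is exactly the claimed bound.

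The routine ingredients (Green's formula, interpolation, energy stability) are standard; the step that needs the most care, and which I expect to be the main obstacle, is the $u$-part of the energy estimate, where the loss of a full order in the inverse inequality $\|u^h\|_{H^{1/2}(\Gamma)}\le Ch^{-1/2}\|u^h\|_{L^2(\Gamma)}$ must be shown to be \emph{exactly} compensated by the factor $h$ gained through duality. Conceptually, the heart of the argument is that restricting the dual test functions to $z\in R$ removes the corner singularity and thereby restores the convex-case order for the projected error.
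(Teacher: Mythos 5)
Your proposal is correct and follows essentially the same route as the paper: the triangle inequality through the regularized solution $y^h$, the duality argument over $z\in R$ with the fully $H^2$-regular dual problem \eqref{eq:sergepbDir}--\eqref{eq:serge2}, Galerkin orthogonality plus interpolation to gain the factor $h$, and finally an $h^{-1/2}$ bound on $\|\nabla(y^h-y_h)\|_{L^2(\Omega)}$. The only (harmless) deviation is in that last energy bound, where the paper argues via the discrete lifting $B_hu^h$, the C\'ea lemma and the local inverse estimate \eqref{eq:serge3}, which is somewhat more elementary than your splitting into $f$- and $u$-parts with the $H^{1/2}(\Gamma)$-stability of the discrete harmonic extension; both yield $c\|f\|_{H^{-1}(\Omega)}+ch^{-1/2}\|u\|_{L^2(\Gamma)}$.
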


\begin{proof}
  By the triangle inequality we have
  \begin{align} \label{thomas:triangleinequality0207}
    \|\Pi_R(y-y_h)\|_{L^2(\Omega)} &\le 
    \|\Pi_R(y-y^h)\|_{L^2(\Omega)} + \|\Pi_R(y^h-y_h)\|_{L^2(\Omega)}. 
  \end{align}
  The first term is estimated in
  \eqref{est:regularizationerrornonconvex}.  For the second term we
  first notice that $y^h-y_h\in H^1_0(\Omega)$ satisfies the Galerkin
  orthogonality
  \begin{align}\label{eq:y^h-y_hGalerkin}
    (\nabla (y^h-y_h),\nabla v_h)_\Omega =0\quad\forall v_h\in
    Y_{0h},
  \end{align}
  see \eqref{eq:regsol} and \eqref{eq:serge20/06:6}.
  With that, we estimate $\|\Pi_R(y^h-y_h)\|_{L^2(\Omega)}$ by a
  similar arguments as $\|\Pi_R(y-y^h)\|_{L^2(\Omega)}$ in the proof
  of Lemma \ref{lem:regularizationerrornonconvex}. Recall from
  \eqref{eq:sergepbDir} and \eqref{eq:serge2} that $v_z\in
  H^2(\Omega)\cap H^1_0(\Omega)$ is the weak solution of $\Delta v_z=z\in
  R$. It can be approximated by the Lagrange interpolant $I_hv_z$
  satisfying
  \[
    \|\nabla(v_z-I_h v_z)\|_{L^2(\Omega)}\le ch \|v_z\|_{H^2(\Omega)} 
    \le ch \|z\|_{L^2(\Omega)}.
  \]
  We get 
  \begin{align}
    \|\Pi_R(y^h-y_h)\|_{L^2(\Omega)}&=\sup_{z\in R, z\ne 0}
    \frac{(y^h-y_h, z)_\Omega}{\|z\|_{L^2(\Omega)}} =
    \sup_{z\in R, z\ne 0} \frac{(\nabla(y^h-y_h), 
    \nabla v_z)_\Omega}{\|z\|_{L^2(\Omega)}} \nonumber \\ &=
    \sup_{z\in R, z\ne 0} \frac{(\nabla(y^h-y_h), 
    \nabla (v_z-I_h v_z))_\Omega}{\|z\|_{L^2(\Omega)}} \nonumber  \\&\le 
    ch \|\nabla(y^h-y_h)\|_{L^2(\Omega)}.
    \label{eq:thomas0207}
  \end{align} 

  In order to bound $\|\nabla(y^h-y_h)\|_{L^2(\Omega)}$ by the data we
  consider a lifting $B_hu^h\in Y_{*h}$ defined by the nodal values as
  follows: 
  \begin{align}\label{def:Rh}
    (B_hu^h)(x)&= \begin{cases} u^h(x), &\text{for all nodes } x\in\Gamma,\\
    0 &\text{for all nodes } x\in\Omega.\end{cases}
  \end{align}
  The homogenized solution $y_0^h=y^h-B_hu^h\in H^1_0(\Omega)$ satisfies
  \begin{align*}
    (\nabla y_0^h,\nabla v)_\Omega = (f,v)_\Omega - 
    (\nabla (B_hu^h),\nabla v)_\Omega \quad\forall v\in H^1_0(\Omega).
  \end{align*}
  By taking $v=y_0^h$ we see that
  \begin{align*}
    \|\nabla y_0^h\|^2_{L^2(\Omega)} \le \|f\|_{H^{-1}(\Omega)}\|y_0^h\|_{H^1(\Omega)}
    +\|\nabla (B_hu^h)\|_{L^2(\Omega)} \|\nabla y_0^h\|_{L^2(\Omega)}.
  \end{align*}
  Using the Poincar\'e inequality we obtain
  \begin{align}\label{eq:serge7}
    \|\nabla y_0^h\|_{L^2(\Omega)} \le c\|f\|_{H^{-1}(\Omega)}+
    \|\nabla (B_hu^h)\|_{L^2(\Omega)},
  \end{align}
  and with the C\'ea lemma
  \begin{align*}
    \|\nabla(y^h-y_h)\|_{L^2(\Omega)} &\le 
    \|\nabla(y^h-B_hu^h)\|_{L^2(\Omega)} = 
    \|\nabla y_0^h\|_{L^2(\Omega)} \\ &\le c\|f\|_{H^{-1}(\Omega)}+
    \|\nabla (B_hu^h)\|_{L^2(\Omega)}.
  \end{align*}

  The remaining term $\|\nabla (B_hu^h)\|_{L^2(\Omega)}$
  is estimated by using the inverse inequality
  \begin{align*}
    \|\nabla (B_hu^h)\|_{L^2(T)}\le ch^{-1/2} \|u^h\|_{L^2(E)}.
  \end{align*}
  for $E\subset T\cap\Gamma$, $T\in\mathcal{T}_h$, which can
  be proved by standard scaling arguments, to get
  \begin{align}\label{eq:serge3}
    \|\nabla (B_hu^h)\|_{L^2(\Omega)}\le ch^{-1/2} \|u^h\|_{L^2(\Gamma)}.
  \end{align}
  Hence we proved
  \begin{align*}
    \|\nabla (y^h-y_h)\|_{L^2(\Omega)}  &\le
    c\|f\|_{H^{-1}(\Omega)} + ch^{-1/2} \|u^h\|_{L^2(\Gamma)}.
  \end{align*}
  With \eqref{thomas:triangleinequality0207}, \eqref{est:regularizationerrornonconvex},
  \eqref{eq:thomas0207}, the previous inequality, and \eqref{eq:uhstable} we finish the proof.
\end{proof}

With \eqref{eq:serge20/06:2} we can immediately conclude the following result.

\begin{corollary}\label{cor:serge}
  Let $\Omega$ be a non-convex domain and let $y_h\in Y_{*h}$ be the
  solution of \eqref {eq:serge20/06:6repeat}, then the discretization error
  estimate
  \begin{align*}
    \|y-(\Pi_R y_h+\alpha(y) \Sd)\|_{L^2(\Omega)}\le Ch^{1/2}
    \left(h^{1/2}\|f\|_{H^{-1}(\Omega)}+ \|u\|_{L^2(\Gamma)}\right)
  \end{align*}
  holds, reminding that $p_s$ and $\alpha(y)$ are given by \eqref{def:ps} and \eqref{eq:serge20/06:7}, respectively.
\end{corollary}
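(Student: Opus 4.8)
The plan is to recognize that the error quantity in the statement collapses exactly to $\Pi_R(y-y_h)$, which has already been controlled in Lemma \ref{lem:3.1}; the corollary is then immediate. So the work is entirely algebraic bookkeeping on top of the estimate already established.

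First I would invoke the orthogonal splitting \eqref{eq:serge20/06:2}, namely $y=\Pi_R y+\alpha(y)\,\Sd$, and substitute it into the expression whose norm we must bound:
\[
  y-\bigl(\Pi_R y_h+\alpha(y)\,\Sd\bigr)
  =\bigl(\Pi_R y+\alpha(y)\,\Sd\bigr)-\Pi_R y_h-\alpha(y)\,\Sd
  =\Pi_R y-\Pi_R y_h.
\]
The crucial point is that the singular contributions $\alpha(y)\,\Sd$ enter with opposite signs and cancel identically, because the coefficient $\alpha(y)$ used in the reconstruction is exactly the one appearing in the decomposition of the true solution $y$ (given explicitly by \eqref{eq:serge20/06:7}). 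Since $\Pi_R$ is a linear projection, the right-hand side equals $\Pi_R(y-y_h)$.

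It then remains only to apply Lemma \ref{lem:3.1}, which yields
\[
  \|\Pi_R(y-y_h)\|_{L^2(\Omega)}\le C h^{1/2}
  \left(h^{1/2}\|f\|_{H^{-1}(\Omega)}+\|u\|_{L^2(\Omega)}^{\phantom{1}}\right),
\]
and this is precisely the asserted bound. There is no genuine analytical obstacle here: the essential difficulty—separating the regular part of the error from the singular corner contribution, and showing that $\Pi_R(y-y_h)$ retains the convex-domain rate $h^{1/2}$—was already overcome in Lemmas \ref{lem:regularizationerrornonconvex} and \ref{lem:3.1}. The only thing to be careful about is that the singular terms match up and cancel exactly, so that the full $L^2(\Omega)$ error of the reconstructed approximation $\Pi_R y_h+\alpha(y)\,\Sd$ is governed solely by the regular-part estimate.
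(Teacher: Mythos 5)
Your argument is exactly the paper's: the paper concludes the corollary immediately from the orthogonal decomposition \eqref{eq:serge20/06:2}, which makes the singular parts cancel so that the error reduces to $\Pi_R(y-y_h)$, bounded by Lemma \ref{lem:3.1}. The only blemish is a typo in your final display, $\|u\|_{L^2(\Omega)}$ where it should read $\|u\|_{L^2(\Gamma)}$.
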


Hence the positive result is that $\Pi_R y_h+\alpha(y)\Sd$ is a better
approximation of $y$ than $y_h$. The problem is that $\Sd$ and $\Fd$
are used explicitly, and in practice they are not known. A remedy of
this drawback is the aim of the next section.

\section{\label{sec:scm}Approximate singular functions}

Following \cite{ciarletjr:03}, we approximate $p_s$ from \eqref{def:ps} by 
\begin{align}\label{eq:psh}
  \begin{split}
    p_s^h &= p_h^* - r_h + r^{-\lambda}\sin(\lambda\theta), \quad
    r_h = B_h \left( r^{-\lambda}\sin(\lambda\theta)\right), \\
    p_h^*&\in Y_{0h}:\quad (\nabla p_h^*,\nabla v_h)_\Omega =
    (\nabla r_h,\nabla v_h)_\Omega \quad\forall v_h\in Y_{0h},
  \end{split}
\end{align}
with $B_h$ from \eqref{def:Rh}. 
The function $\Fd$ from \eqref{def:phis} admits the splitting
\begin{align}\label{eq:serge20/06:11}
  \Fd=\tilde \phi+\beta r^\lambda \sin(\lambda \theta),
\end{align}
with $\tilde \phi\in H^2(\Omega)$ and
$\beta=\pi^{-1}\|\Sd\|^2_{L^2(\Omega)}$, see again
\cite{ciarletjr:03}.  It is approximated by
\begin{align}\label{def:phish}
  \begin{split}
    \phi_s^h &= \phi_h^*-\beta_hs_h+\beta_h r^\lambda\sin(\lambda\theta), \quad
    s_h = B_h \left( r^\lambda\sin(\lambda\theta)\right), \quad
    \beta_h=\frac1\pi\|\Sd^h\|^2_{L^2(\Omega)}, \\
    \phi_h^*&\in Y_{0h}:\quad (\nabla \phi_h^*,\nabla v_h)_\Omega =
    (p_s^h,v_h)_\Omega + 
    \beta_h (\nabla s_h,\nabla v_h)_\Omega \quad\forall v_h\in Y_{0h},
  \end{split}
\end{align}
that means, $\tilde\phi$ is approximated by
$\tilde\phi_h=\phi_h^*-\beta_hs_h\in Y_h$.
The approximation errors are bounded by
\begin{align}
  \label{eq:serge20/06:10}
  \|\Sd-\Sd^h\|_{L^2(\Omega)}&\le ch^{2\lambda-\epsilon}\le ch, \\
  \label{eq:serge20/06:12}
  |\beta-\beta_h|&\le ch^{2\lambda-\varepsilon}\le ch, \\
  \label{eq:serge20/06:13}
  \|\Fd-\Fd^h\|_{1,\Omega}&\le ch,
\end{align}
see \cite[Lemmas 3.1--3.3]{ciarletjr:03}, where
\eqref{eq:serge20/06:12} and \eqref{eq:serge20/06:13} imply
\begin{align}\label{eq:serge20/06:13b}
\|\tilde \phi -\tilde \phi_h\|_{1,\Omega}\le ch.
\end{align}

At the end of Section \ref{sec:3} we saw that $\Pi_R y_h+\alpha(y)\Sd$
is a better approximation of $y$ than $y_h$. Since this function is
not computable we approximate it by 
\begin{align}\label{eq:serge20/06:14}
  z_h=\Pi_R^h\, y_h+\alpha_h \Sd^h,
\end{align}
with
\begin{align}\label{def:PiRhyh}
  \Pi_R^h\, y_h=y_h-\gamma_h
  \Sd^h, \quad \gamma_h=\frac{(y_h, \Sd^h)_\Omega}{\|\Sd^h\|^2_{L^2(\Omega)}}
\end{align}
and a suitable approximation $\alpha_h$ of 
\[
  \alpha(y) =\frac{-(u,\partial_n \Fd)_\Gamma+(f,\Fd)_\Omega}%
  {(\Sd, \Sd)_\Omega}
\]
from \eqref{eq:serge20/06:7}.  To this end we write the problematic
term by using \eqref{eq:serge20/06:11} as
\[
  (u,\partial_n \Fd)_\Gamma=(u,\partial_n \tilde \phi)_\Gamma+
\beta (u,\partial_n (r^\lambda \sin(\lambda \theta)))_\Gamma.
\]
and replace the term $(u,\partial_n \tilde \phi)_\Gamma$ by
$(u^h,\partial_n \tilde \phi)_\Gamma$. Since $\tilde \phi$
belongs to $H^2(\Omega)$ and $u^h$ is the trace of $B_hu^h$, we get by
using the Green formula
\begin{align}\nonumber
  (u^h,\partial_n \tilde \phi)_\Gamma&=
  (B_h u^h,\Delta \tilde \phi)_\Omega +
  (\nabla  B_h u^h,\nabla \tilde \phi)_\Omega \\ \label{eq:thomas0307a} &=
  - (B_h u^h,\Sd)_\Omega +
  (\nabla  B_h u^h, \nabla \tilde \phi)_\Omega
\end{align}
as $\Delta \tilde \phi=\Delta \Fd=-\Sd$.
With all these notations and results, we define
\begin{align}\label{eq:serge20/06:15}
  \alpha_h = \frac{
    (B_h u^h,\Sd^h)_\Omega -
    (\nabla  B_h u^h, \nabla \tilde \phi_h)_\Omega
    - \beta_h(u,\partial_n (r^\lambda \sin(\lambda \theta)))_\Gamma
    + (f,\Fd^h)_\Omega
  }{(\Sd^h,\Sd^h)_\Omega^2}.
\end{align}
Note that $\alpha_h$ can be computed explicitly and therefore $z_h$ as well.

Let us estimate the approximation errors made.
\begin{lemma}\label{lem:4.1}
  Let $\Omega$ be a non-convex domain and let $y_h\in Y_{*h}$ be the
  solution of \eqref {eq:serge20/06:6repeat}. Then the error estimates
  \begin{align}\label{eq:serge20/06:17a}
    \|\Pi_R y_h-\Pi_R^h\, y_h\|_{L^2(\Omega)} &\le
    ch  \left(\|f\|_{H^{-1}(\Omega)}+ \|u\|_{L^2(\Gamma)}\right), \\ 
    \label{eq:serge20/06:20}
    |\alpha(y) -\alpha_h| &\le 
    ch^{1/2}  \left(h^{1/2}\|f\|_{H^{-1}(\Omega)}+ \|u\|_{L^2(\Gamma)}\right)
  \end{align}
  hold.
\end{lemma}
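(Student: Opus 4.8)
The plan is to prove the two estimates \eqref{eq:serge20/06:17a} and \eqref{eq:serge20/06:20} separately, since although both measure the effect of replacing the exact singular objects $\Sd$, $\Fd$, $\beta$ by their computable counterparts $\Sd^h$, $\tilde\phi_h$, $\beta_h$, their structure is different. A common ingredient I would establish first is the $L^2$-stability of the finite element solution, $\|y_h\|_{L^2(\Omega)}\le c(\|f\|_{H^{-1}(\Omega)}+\|u\|_{L^2(\Gamma)})$, which follows from the triangle inequality together with the well-posedness bound for $y$ and the error estimate \eqref{jonny:fe_error}. This immediately bounds the two scalar coefficients $\gamma_h$ from \eqref{def:PiRhyh} and $\alpha(y_h)=(y_h,\Sd)_\Omega/\|\Sd\|^2_{L^2(\Omega)}$ by $c(\|f\|_{H^{-1}(\Omega)}+\|u\|_{L^2(\Gamma)})$, using Cauchy--Schwarz and the fact that $\|\Sd^h\|_{L^2(\Omega)}$ is bounded from below for $h$ small enough by \eqref{eq:serge20/06:10}.

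For \eqref{eq:serge20/06:17a} I would write the difference of the two projections as
\[
  \Pi_R y_h-\Pi_R^h y_h=\gamma_h\Sd^h-\alpha(y_h)\Sd
  =\gamma_h(\Sd^h-\Sd)+(\gamma_h-\alpha(y_h))\Sd,
\]
and estimate the two summands. The first is controlled by $|\gamma_h|\,\|\Sd^h-\Sd\|_{L^2(\Omega)}\le ch(\ldots)$ through \eqref{eq:serge20/06:10}. For the second I would expand $\gamma_h-\alpha(y_h)$ over the common denominator $\|\Sd^h\|^2_{L^2(\Omega)}\|\Sd\|^2_{L^2(\Omega)}$ and split its numerator as $(y_h,\Sd^h-\Sd)_\Omega\|\Sd\|^2_{L^2(\Omega)}+(y_h,\Sd)_\Omega(\|\Sd\|^2_{L^2(\Omega)}-\|\Sd^h\|^2_{L^2(\Omega)})$; both pieces carry a factor $\|\Sd-\Sd^h\|_{L^2(\Omega)}\le ch$ (the second via $\|\Sd\|^2-\|\Sd^h\|^2=(\Sd-\Sd^h,\Sd+\Sd^h)_\Omega$), while the remaining factors are bounded by the data and the denominators by constants. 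This yields the order-$h$ bound.

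The estimate \eqref{eq:serge20/06:20} is the more delicate one. Writing $\alpha(y)=N/D$ and $\alpha_h=N_h/D_h$ with $D=\|\Sd\|^2_{L^2(\Omega)}$, $D_h=\|\Sd^h\|^2_{L^2(\Omega)}$, I would use $\alpha(y)-\alpha_h=(N-N_h)/D+N_h(D_h-D)/(DD_h)$. The denominator perturbation contributes only $ch(\|f\|_{H^{-1}(\Omega)}+\|u\|_{L^2(\Gamma)})$ exactly as above, so the heart of the matter is $N-N_h$. Here I would first insert the intermediate numerator obtained by replacing $u$ by $u^h$ in the term $(u,\partial_n\tilde\phi)_\Gamma$ and rewriting it via the Green identity \eqref{eq:thomas0307a}; this reproduces precisely the computable terms of \eqref{eq:serge20/06:15} but with $\Sd$, $\tilde\phi$, $\beta$, $\Fd$ in place of $\Sd^h$, $\tilde\phi_h$, $\beta_h$, $\Fd^h$. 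The difference then splits into the consistency term $-(u-u^h,\partial_n\tilde\phi)_\Gamma$, the term $(B_hu^h,\Sd-\Sd^h)_\Omega$, the term $(\nabla B_hu^h,\nabla(\tilde\phi-\tilde\phi_h))_\Omega$, the term $(\beta-\beta_h)(u,\partial_n(r^\lambda\sin(\lambda\theta)))_\Gamma$, and $(f,\Fd-\Fd^h)_\Omega$.

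I expect the main obstacle to be the bookkeeping of orders, so that the dominant contribution is indeed $h^{1/2}\|u\|_{L^2(\Gamma)}$. The consistency term is estimated by $\|u-u^h\|_{H^{-1/2}(\Gamma)}\,\|\partial_n\tilde\phi\|_{H^{1/2}(\Gamma)}\le ch^{1/2}\|u\|_{L^2(\Gamma)}$, using $\tilde\phi\in H^2(\Omega)$ and \eqref{eq:carstensen}; this is the term that fixes the rate. The $\Sd-\Sd^h$ term is $O(h^{3/2}\|u\|)$ once $\|B_hu^h\|_{L^2(\Omega)}\le ch^{1/2}\|u^h\|_{L^2(\Gamma)}$ is obtained by scaling. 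The critical balance is the third term, where the blow-up $\|\nabla B_hu^h\|_{L^2(\Omega)}\le ch^{-1/2}\|u^h\|_{L^2(\Gamma)}$ from \eqref{eq:serge3} is exactly compensated by $\|\tilde\phi-\tilde\phi_h\|_{1,\Omega}\le ch$ from \eqref{eq:serge20/06:13b}, giving again $O(h^{1/2}\|u\|)$. The $\beta$-term and the $f$-term are $O(h)$ by \eqref{eq:serge20/06:12} and \eqref{eq:serge20/06:13}, the former requiring the observation that $\partial_n(r^\lambda\sin(\lambda\theta))\in L^2(\Gamma)$ because $\lambda\in(\tfrac12,1)$ for $\omega\in(\pi,2\pi)$. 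Collecting these terms gives $|N-N_h|\le ch^{1/2}(h^{1/2}\|f\|_{H^{-1}(\Omega)}+\|u\|_{L^2(\Gamma)})$, and dividing by the bounded-below denominator yields \eqref{eq:serge20/06:20}.
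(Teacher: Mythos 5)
Your proof is correct and follows essentially the same route as the paper: the same $L^2$-stability bound for $y_h$, the same common-denominator splitting of $\gamma_h-\alpha(y_h)$ with the factor $\|\Sd-\Sd^h\|_{L^2(\Omega)}\le ch$ for \eqref{eq:serge20/06:17a}, and the same five-term decomposition of $\alpha(y)-\alpha_h$ (inserting $u^h$ and the Green identity \eqref{eq:thomas0307a}) with the decisive balancing of $\|\nabla B_hu^h\|_{L^2(\Omega)}\le ch^{-1/2}\|u^h\|_{L^2(\Gamma)}$ against $\|\tilde\phi-\tilde\phi_h\|_{1,\Omega}\le ch$ for \eqref{eq:serge20/06:20}. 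The only deviation is that you additionally track the mismatch between the denominators $\|\Sd\|^2_{L^2(\Omega)}$ and $\|\Sd^h\|^2_{L^2(\Omega)}$ in $\alpha(y)-\alpha_h$ (which the paper's displayed identity silently elides); that is extra care within the same argument, not a different method.
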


\begin{proof}
  With the definitions of $\Pi_R$ and $\Pi_R^h$, with
  $\gamma:=(y_h,\Sd)_\Omega/\|\Sd\|_{L^2(\Omega)}^2$, and by
  using the triangle inequality we have
  \begin{align*}
    \|\Pi_R y_h-\Pi_R^h\, y_h\|_{L^2(\Omega)} &=
    \|\gamma p_s-\gamma_h p_s^h\|_{L^2(\Omega)} \\ &\le
    |\gamma-\gamma_h|\,\|p_s^h\|_{L^2(\Omega)} + 
    |\gamma|\,\|p_s-p_s^h\|_{L^2(\Omega)}
  \end{align*}
  We write
  \begin{align*}
    \gamma-\gamma_h &= \frac{(y_h, \Sd)_\Omega}{\|\Sd\|^2_{L^2(\Omega)} } -
    \frac{(y_h, \Sd^h)_\Omega}{\|\Sd^h\|^2_{L^2(\Omega)} } \\
    &=\frac{(y_h, \Sd-\Sd^h)_\Omega}{\|\Sd\|^2_{L^2(\Omega)} }+ (y_h, \Sd^h)_\Omega
    \left(\frac{1}{\|\Sd\|^2_{L^2(\Omega)}
    }-\frac{1}{\|\Sd^h\|^2_{L^2(\Omega)} }\right)\\
    &=\frac{(y_h, \Sd-\Sd^h)_\Omega}{\|\Sd\|^2_{L^2(\Omega)} }+ (y_h, \Sd^h)_\Omega
    \frac{(p_s^h+p_s,p_s^h-p_s)_{\Omega}}{\|\Sd\|^2_{L^2(\Omega)}\|\Sd^h\|^2_{L^2(\Omega)}},
  \end{align*}
  and by the Cauchy-Schwarz inequality and \eqref{eq:serge20/06:10} we get
  \[
    |\gamma-\gamma_h|\le ch \|y_h\|_{L^2(\Omega)}.
  \]
  We have used that $\|p_s\|_{L^2(\Omega)}$ and $\|p_s^h\|_{L^2(\Omega)}$ can be treated as constants due to the definition of $p_s$ and due to \eqref{eq:serge20/06:10}.
  We conclude with $|\gamma|\le
  c \|y_h\|_{L^2(\Omega)}$, and \eqref{eq:serge20/06:10} that
  \begin{align}
    \|\Pi_R y_h-\Pi_R^h\, y_h\|_{L^2(\Omega)} &\le
   ch \|y_h\|_{L^2(\Omega)}.\label{jonny:Pi_R}
  \end{align}
  In view of the finite element error estimate \eqref{jonny:fe_error} and the standard a priori estimate for the very weak solution,
  \begin{equation*}
    \|y\|_{L^2(\Omega)}\leq c\left(\|f\|_{H^{-1}(\Omega)}+\|u\|_{L^2(\Gamma)}\right),
  \end{equation*}
  see Lemma 2.3 of \cite{ApelNicaisePfefferer2014a}, we have
  \[
    \|y_h\|_{L^2(\Omega)}\leq \|y\|_{L^2(\Omega)}+\|y-y_h\|_{L^2(\Omega)}\leq c\left(\|f\|_{H^{-1}(\Omega)}+\|u\|_{L^2(\Gamma)}\right).
  \]
  This estimate together with \eqref{jonny:Pi_R} proves \eqref{eq:serge20/06:17a}.
\pagebreak[3]

  The proof of the estimate \eqref{eq:serge20/06:20} is based on
  writing the problematic term in the definition of $\alpha(y)$
  without approximation as
  \begin{align*}
    (u,\partial_n \Fd)_\Gamma &=
    (u,\partial_n \tilde \phi)_\Gamma+
    \beta (u,\partial_n (r^\lambda \sin(\lambda \theta)))_\Gamma \\ &=
    (u-u^h,\partial_n \tilde \phi)_\Gamma+
    (u^h,\partial_n \tilde \phi)_\Gamma+
    \beta (u,\partial_n (r^\lambda \sin(\lambda \theta)))_\Gamma \\ &=
    (u-u^h,\partial_n \tilde \phi)_\Gamma
      - (B_h u^h,\Sd)_\Omega +
  (\nabla  B_h u^h, \nabla \tilde \phi)_\Omega+ 
    \beta (u,\partial_n (r^\lambda \sin(\lambda \theta)))_\Gamma 
  \end{align*}
  where we used \eqref{eq:thomas0307a} in the last step. Consequently,
  we showed that
  \begin{align*}
    \alpha(y)-\alpha_h& =\frac{1}{\|\Sd\|_{L^2(\Omega)}^2} \Big(
    -(u-u^h,\partial_n \tilde \phi)_\Gamma +
     (B_h u^h,\Sd-\Sd^h)_\Omega  -
    (\nabla  B_h u^h, \nabla(\tilde\phi-\tilde\phi_h))_\Omega \\ &\qquad
     -   (\beta-\beta_h)\, 
    (u,\partial_n (r^\lambda \sin(\lambda \theta)))_\Gamma + 
    (f,\Fd-\Fd^h)_\Omega\Big).
  \end{align*}
  To prove \eqref{eq:serge20/06:20}, in view of
  \eqref{eq:serge20/06:10}, \eqref{eq:serge20/06:12}, and
  \eqref{eq:serge20/06:13} it remains to show that
  \begin{align*}
    \left|(u-u^h,\partial_n \tilde \phi)_\Gamma\right| 
    &\le ch^{1/2}\|u\|_{L^2(\Gamma)},\\
    \left|(B_h u^h, \Sd-\Sd^h)_\Omega\right| 
    &\le ch^{1/2}\|u\|_{L^2(\Gamma)}, \\
    \left|(\nabla B_h u^h,\nabla(\tilde \phi-\tilde \phi^h))_\Omega\right|
    &\le ch^{1/2}\|u\|_{L^2(\Gamma)}.
  \end{align*}
  The first estimate follows from the estimate \eqref{eq:carstensen}
  and the fact that $\tilde \phi$ belongs to $H^2(\Omega)$.  The
  second one follows from the Cauchy-Schwarz inequality and the
  estimates \eqref{eq:serge3} and \eqref{eq:serge20/06:10}. Similarly,
  the third estimate follows from the Cauchy-Schwarz inequality and the
  estimates \eqref{eq:serge3} and \eqref{eq:serge20/06:13b}.
\end{proof}

\begin{corollary}
  Let $\Omega$ be a non-convex domain and let $y_h\in Y_{*h}$ be the
  solution of \eqref {eq:serge20/06:6repeat} and let $z_h$ be derived
  by \eqref{eq:serge20/06:14}, \eqref{def:PiRhyh}, and
  \eqref{eq:serge20/06:15}, then the discretization error estimate
  \begin{align*}
    \|y-z_h\|_{L^2(\Omega)}\le Ch^{1/2}  
    \left(h^{1/2}\|f\|_{H^{-1}(\Omega)}+ \|u\|_{L^2(\Gamma)}\right)
  \end{align*}
  holds.
\end{corollary}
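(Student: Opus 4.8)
The plan is to compare the exact orthogonal decomposition $y = \Pi_R y + \alpha(y)\,\Sd$ from \eqref{eq:serge20/06:2} with the computable approximation $z_h = \Pi_R^h\, y_h + \alpha_h \Sd^h$ from \eqref{eq:serge20/06:14}, and to split the difference into contributions that are each already controlled by the lemmas of Sections \ref{sec:3} and \ref{sec:scm}. Writing
\[
  y - z_h = \bigl(\Pi_R y - \Pi_R^h\, y_h\bigr) + \bigl(\alpha(y)\,\Sd - \alpha_h \Sd^h\bigr),
\]
I would first treat the ``regular'' part by inserting $\Pi_R y_h$,
\[
  \Pi_R y - \Pi_R^h\, y_h = \Pi_R(y - y_h) + \bigl(\Pi_R y_h - \Pi_R^h\, y_h\bigr),
\]
so that the first summand is bounded by Lemma \ref{lem:3.1} and the second by \eqref{eq:serge20/06:17a}. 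Both bounds are of the required form (the second is even of full order $h$), so this part poses no difficulty.

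For the singular part I would again add and subtract, this time splitting off the coefficient error,
\[
  \alpha(y)\,\Sd - \alpha_h \Sd^h = \alpha(y)\,(\Sd - \Sd^h) + \bigl(\alpha(y) - \alpha_h\bigr)\Sd^h,
\]
and estimate $\|\alpha(y)(\Sd-\Sd^h)\|_{L^2(\Omega)} \le |\alpha(y)|\,\|\Sd - \Sd^h\|_{L^2(\Omega)}$ using \eqref{eq:serge20/06:10}, while $\|(\alpha(y)-\alpha_h)\Sd^h\|_{L^2(\Omega)} \le |\alpha(y)-\alpha_h|\,\|\Sd^h\|_{L^2(\Omega)}$ is controlled by \eqref{eq:serge20/06:20}. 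Here $\|\Sd^h\|_{L^2(\Omega)}$ may be treated as a constant, as already observed in the proof of Lemma \ref{lem:4.1}, and I would additionally supply the a priori bound $|\alpha(y)| \le c\bigl(\|f\|_{H^{-1}(\Omega)} + \|u\|_{L^2(\Gamma)}\bigr)$, which follows from the representation \eqref{eq:serge20/06:7} together with $\partial_n \Fd \in L^2(\Gamma)$ and $\Fd \in H^1(\Omega)$.

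Collecting the four contributions and using $h \le h^{1/2}$ for $h$ sufficiently small, every term is dominated by $Ch^{1/2}\bigl(h^{1/2}\|f\|_{H^{-1}(\Omega)} + \|u\|_{L^2(\Gamma)}\bigr)$, which is the claimed estimate. The main, and in fact only genuinely sharp, obstacle is the coefficient error $|\alpha(y) - \alpha_h|$: all other contributions carry a full factor $h$, whereas this term yields only $h^{1/2}$ in front of $\|u\|_{L^2(\Gamma)}$ and thus dictates the overall order of convergence. Since that term is precisely the content of \eqref{eq:serge20/06:20} in Lemma \ref{lem:4.1}, the corollary reduces to assembling the pieces, with no further analysis required.
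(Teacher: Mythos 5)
Your proof is correct and follows essentially the same route as the paper: both split $y-z_h$ into $\Pi_R(y-y_h)$, $\Pi_R y_h-\Pi_R^h y_h$, a coefficient-error term, and a singular-function-error term, and both invoke Lemma \ref{lem:3.1}, Lemma \ref{lem:4.1}, \eqref{eq:serge20/06:10}, and an a priori bound on the singular coefficient derived from \eqref{eq:serge20/06:7}. The only (immaterial) difference is that you attach $\Sd-\Sd^h$ to $\alpha(y)$ rather than to $\alpha_h$, which lets you skip the paper's extra triangle inequality $|\alpha_h|\le|\alpha_h-\alpha(y)|+|\alpha(y)|$; your identification of $|\alpha(y)-\alpha_h|$ as the order-limiting term is also accurate.
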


\begin{proof}
  The main ingredients of the proof were already derived. Indeed, it is
  \begin{align*}
    \|y-z_h\|_{L^2(\Omega)}&=
    \|\Pi_Ry+\alpha(y) \Sd-\Pi_R^h\, y_h-\alpha_h \Sd^h \|_{L^2(\Omega)} \\ &\le
    \|\Pi_Ry-\Pi_Ry_h\|_{L^2(\Omega)} +
    \|\Pi_Ry_h-\Pi_R^h\, y_h\|_{L^2(\Omega)} + \\ &\qquad
    |\alpha(y)-\alpha_h| \,\| \Sd \|_{L^2(\Omega)} +
    |\alpha_h|\, \| \Sd- \Sd^h \|_{L^2(\Omega)}.
  \end{align*}
  
  The first three terms can be estimated by using Lemmas \ref{lem:3.1}
  and \ref{lem:4.1}. So it remains to treat the fourth term.
  To bound $|\alpha_h|$ we use the triangle inequality
  \[
    |\alpha_h|\leq |\alpha_h-\alpha(y)|+|\alpha(y)|.
  \]
  For the first term we use \eqref{eq:serge20/06:20}, while for the
  second term we use \eqref{eq:serge20/06:7} reminding that $\phi_s$
  belongs to $H^{3/2+\epsilon}(\Omega)$ with some $\epsilon>0$. 
  Altogether we have 
  \[
    |\alpha_h|\leq C
    \left(\|f\|_{H^{-1}(\Omega)}+ \|u\|_{L^2(\Gamma)}\right)
  \]
  and conclude by using \eqref{eq:serge20/06:10}.
\end{proof}

Before we describe the numerical experiments, let us summarize the
algorithm.
\begin{enumerate}
\item Compute the finite element solution
  \begin{align*}
    y_h\in Y_{*h}:\quad (\nabla y_h,\nabla v_h)_\Omega =
    (f,v_h)_\Omega\quad\forall v_h\in Y_{0h}
  \end{align*}
  where $Y_{*h}=\{v_h\in Y_h: v_h|_\Gamma=u^h\}$, compare
  \eqref{eq:serge20/06:6}, with $u^h\in Y_h^\partial$ being an
  approximation of the boundary datum $u$ satisfying
  \eqref{eq:uhstable} and \eqref{eq:carstensen}. 
\item Compute the approximate singular functions:
  \begin{align*}
    r_h &= B_h \left( r^{-\lambda}\sin(\lambda\theta)\right), \\
    p_h^*&\in Y_{0h}:\quad (\nabla p_h^*,\nabla v_h)_\Omega =
    (\nabla r_h,\nabla v_h)_\Omega \quad\forall v_h\in Y_{0h}, \\
    \tilde p_h&= p_h^* - r_h, \\
    \beta_h&=\frac1\pi
    \|\tilde p_h + r^{-\lambda}\sin(\lambda\theta)\|^2_{L^2(\Omega)}, \\
    s_h &= B_h \left( r^\lambda\sin(\lambda\theta)\right), \\
    \phi_h^*&\in Y_{0h}:\quad (\nabla \phi_h^*,\nabla v_h)_\Omega =
    ( \tilde p_h + r^{-\lambda}\sin(\lambda\theta),v_h)_\Omega +  
    \beta_h (\nabla s_h,\nabla v_h)_\Omega \quad\forall v_h\in Y_{0h}, \\
    \tilde\phi_h &= \phi_h^*-\beta_hs_h, 
  \end{align*}
  compare \eqref{eq:psh} and \eqref{def:phish}.
\item Compute 
  \begin{align*}
    \gamma_h &= \frac{(y_h, \Sd^h)_\Omega}{(\Sd^h,\Sd^h)_\Omega} 
    \quad\text{with }p_s^h=\tilde p_h+r^{-\lambda}\sin(\lambda\theta),\\
    \alpha_h &=\frac{
    (B_h u^h,\Sd^h)_\Omega -
    (\nabla  B_h u^h, \nabla \tilde \phi_h)_\Omega
    - \beta_h(u,\partial_n (r^\lambda \sin(\lambda \theta)))_\Gamma
    + (f,\Fd^h)_\Omega
  }{(\Sd^h,\Sd^h)_\Omega^2}, \\
    \delta_h&=\alpha_h-\gamma_h, \\
    \tilde  z_h&=y_h+\delta_h\tilde p_h,
  \end{align*}
  compare \eqref{def:PiRhyh} and \eqref{eq:serge20/06:15}. According
  to \eqref{eq:serge20/06:14}, the numerical solution is
  \begin{align*}
    z_h&=\tilde z_h + \delta_hr^{-\lambda}\sin(\lambda\theta).
  \end{align*}
\end{enumerate}\pagebreak[3]
  Note that all integrals with $r^\lambda$ and $r^{-\lambda}$ must be
  computed with care.

\section{Numerical experiments}

This section is devoted to the numerical verification of our
theoretical results.  For that purpose we present examples with known
solution. Furthermore, to examine the influence of the corner
singularities, we consider several polygonal domain $\Omega_\omega$
depending on an interior angle $\omega\in(0,2\pi)$. The computational
domains are defined by
\begin{equation}\label{eq:compdomain}
  \Omega_\omega:=(-1,1)^2\cap
  \{x\in \R^2: (r(x),\theta(x))\in(0,\sqrt{2}]\times[0,\omega]\},
\end{equation}
where $r$ and $\theta$ stand for the polar coordinates located at the
origin. The boundary of $\Omega_\omega$ is denoted by $\Gamma_\omega$%
.  
We solve the problem
\begin{align}\label{eq:linexample}
    -\Delta y   &= 0 \quad \text{in }\Omega_\omega, &
    y &= u \quad \text{on }\Gamma,
\end{align}
numerically by using the proposed dual singular function method. The
boundary datum $u$ is chosen as follows
\[
  u:=r^{-0.4999}\sin(-0.4999\theta)\quad \text{on } \Gamma_\omega.
\]
This function belongs to $L^p(\Gamma)$ for every $p<2.0004$. The exact
solution of our problem is simply
\[
  y=r^{-0.4999}\sin(-0.4999\theta),
\]
since $y$ is harmonic.

The quasi-uniform finite element meshes for the calculations are
generated by using a newest vertex bisection algorithm. The
discretization errors for different mesh sizes and the corresponding
experimental orders of convergence are given in Table \ref{tab1} for
different interior angles $\omega=270^\circ$ and $\omega=355^\circ$.
We see that the numerical results confirm the expected convergence
rate $1/2$.

\begin{table}\centering
\begin{tabular}{ccc}
\toprule
mesh size $h$  & $\|e_h\|_{L^2(\Omega_\omega)}$ & eoc \\
\midrule
0.25000        & 0.58725                        & \\
0.12500        & 0.42338                        &0.47201 \\
0.06250        & 0.30318                        &0.48177 \\
0.03125        & 0.21606                        &0.48870 \\
0.01562        & 0.15352                        &0.49302 \\
0.00781        & 0.10888                        &0.49572 \\
0.00390        & 0.07712                        &0.49742 \\
\bottomrule
\end{tabular}
\hfill
\begin{tabular}{ccc}
\toprule
mesh size $h$  & $\|e_h\|_{L^2(\Omega_\omega)}$ & eoc \\
\midrule
0.25000        & 1.02069                        & \\
0.12500        & 0.83402                        & 0.29139 \\
0.06250        & 0.58964                        & 0.50025 \\
0.03125        & 0.41696                        & 0.49991 \\
0.01562        & 0.29506                        & 0.49890 \\
0.00781        & 0.20903                        & 0.49725 \\
0.00390        & 0.14836                        & 0.49462 \\
\bottomrule
\end{tabular}
\caption{\label{tab1}Discretization errors $e_h=y-z_h$ for $\omega=3\pi/2$ (left) and $\omega=355\pi/180$ (right)}
\end{table}


We emphasize that the quadrature formula for the numerical integration
of the integral
\[
  (u,\partial_n(r^\lambda\sin(\lambda\theta)))_\Gamma
\]
has to be adapted in order to get a sufficiently good approximation.
Otherwise, the error due to the quadrature formula dominates the
overall error. In our implementation, we chose for the numerical
integration a graded mesh on the boundary ($h_E\sim hr_E^{1-\mu}$ if
the distance $r_E$ of the boundary edge $E$ satisfies $0<r_E<R$ with
$R$ being the radius of the refinement zone and $\mu$ being the
refinement parameter, and $h_T=h^{1/\mu}$ for $r_E=0$) combined with a
one-point Gauss quadrature rule on each element.  Furthermore, the
grading parameter $\mu$ is chosen such that
\[
  \mu\le2\pi/\omega-1,
\]
which seems to be the correct grading to achieve a convergence order
of $1/2$. For the results presented in Table \ref{tab1} we used
$R=0.1$ and $\mu=2\pi/\omega-1$.

%
%
%
%
%

\bibliographystyle{plain}\bibliography{ApPf}
\end{document}